\documentclass[11pt]{article}

\usepackage[utf8]{inputenc}
\usepackage[T1]{fontenc}
\usepackage[english]{babel}

\usepackage{amsmath, amssymb, amsthm}
\usepackage{ytableau}        
\usepackage{float}
\usepackage{caption}
\usepackage{hyperref}
\usepackage{cleveref}
\usepackage{mathdots}

\usepackage{mathtools}

\usepackage{xcolor}
\usepackage{soul}            
\usepackage{tikz}
\usetikzlibrary{
  arrows.meta,
  arrows,
  calc,
  decorations.pathmorphing,
  decorations.pathreplacing,
  positioning,
  shapes
}
\usepackage{subcaption}

\usepackage{enumitem}


\newcommand\odd{\operatorname{odd}}


\usepackage[backend=biber, style=numeric,maxnames=99]{biblatex}
\addbibresource{references.bib}
\usepackage{csquotes}

\usepackage{todonotes}
\usepackage{comment}

\usepackage{geometry}
\geometry{left=.5in, right=.5in, top=1in, bottom=1in}
\setlength{\parskip}{0.5em}

\theoremstyle{definition}
\newtheorem{theorem}{Theorem}[section]
\newtheorem*{theorem-non}{Theorem}
\newtheorem{lemma}[theorem]{Lemma}

\newtheorem{corollary}[theorem]{Corollary}
\newtheorem{definition}[theorem]{Definition}
\theoremstyle{remark}

\newtheorem{example}[theorem]{Example}
\newtheorem{remark}[theorem]{Remark}

\theoremstyle{definition}
\newtheorem*{theorem*}{Theorem}
\numberwithin{equation}{section}


\newcommand{\coloneq}{\mathrel{\mathop:}=}
\definecolor{turkoise}{RGB}{64,224,208} 

\newcommand\qand{\quad\text{and}\quad}
\definecolor{darkblue}{rgb}{0.0,0,0.7}
\renewcommand\emph[1]{\textcolor{darkblue}{\it #1}}


\title{$A_1^{(1)}$-Grounded partitions at levels $1$ and $2$,\\
Part I: bijections}
\author{Benedek Dombos\thanks{\href{mailto:benedek.dombos@unige.ch}{benedek.dombos@unige.ch}, University of Geneva} \and Jihyeug Jang\thanks{\href{mailto:jihyeug.jang@unige.ch}{jihyeug.jang@unige.ch}, University of Geneva}}
\date{\today}

\begin{document}

\maketitle

\let\oldclearpage\clearpage
\renewcommand{\clearpage}{}

\begin{abstract}
Grounded partitions, introduced by Dousse and Konan, are coloured partitions satisfying difference conditions encoded by a matrix. For suitable choices of this matrix, their generating functions are known to coincide with characters of affine Lie algebras. In this paper, we study, from a combinatorial point of view, the grounded partitions introduced by Dousse, Hardiman and Konan and related to the Lie algebra $A_1^{(1)}$. Using the connection with characters, they showed that the generating function for these grounded partitions is an infinite product. We give direct combinatorial proofs of the corresponding product formulas. In particular, we construct two explicit bijections from grounded partitions to odd overpartitions, and to partitions in which the even parts are distinct.
\end{abstract}

\section{Introduction}

A \emph{partition} of a nonnegative integer \( n \) is a non-increasing sequence \(\lambda = (\lambda_1, \lambda_2, \dots, \lambda_k)\) of positive integers such that $n = \lambda_1 + \lambda_2 + \dots + \lambda_k$. For example, there are five partitions of $4$: \((4), (3,1), (2,2), (2,1,1) \), and \( (1,1,1,1)\). The integers \( \lambda_i \) are called the \emph{parts} of the partition $\lambda$. The number of parts \( k \) is called the \emph{length} of \(\lambda\), and is denoted by \(\ell(\lambda)\). The sum of the terms is called the \emph{size} of $\lambda$, denoted $|\lambda|$. The empty sequence corresponds to the unique partition of size $0$, called the \emph{empty partition}.

In the theory of integer partitions, it is convenient to abbreviate some finite and infinite products by \emph{$q$-Pochhammer symbols}. For formal variables $a$ and $q$, define
\begin{align*}
    (a;q)_k &\coloneq (1 - a)(1 - aq)(1 - aq^2) \cdots (1 - aq^{k-1}), \quad\text{ for \( k \geq 1 \)},\\
     (a;q)_0 &\coloneq 1, \quad (a;q)_\infty \coloneq \prod_{k=0}^{\infty} (1 - aq^k).
\end{align*}

The Rogers--Ramanujan identities were proved as $q$-series identities by Rogers and Ramanujan \cite{RR19}. Their significance comes from the fact that the same identities reappear in several different contexts, including the representation theory of Kac--Moody algebras \cite{LM78,LW84,LW85}, double affine Hecke algebras \cite{CherednikFeigin}, modular forms \cite{BringmannOnoRhoades}, orthogonal polynomials \cite{Bressoud81,GarrettIsmailStanton99}, and solvable models in statistical mechanics \cite{AndrewsBaxterForrester84,Baxter81}.

\begin{theorem}[Rogers--Ramanujan identities, in terms of $q$-series, \cite{RR19}]\label{thm:RR}
\begin{align}\label{eq:RR}
    \sum_{n=0}^{\infty} \frac{q^{n^2}}{(q;q)_n} = \frac{1}{(q;q^5)_\infty (q^4;q^5)_\infty}, \qand
    \sum_{n=0}^{\infty} \frac{q^{n^2+n}}{(q;q)_n} = \frac{1}{(q^2;q^5)_\infty (q^3;q^5)_\infty}.
\end{align}
\end{theorem}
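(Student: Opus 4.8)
The plan is to derive both identities of \eqref{eq:RR} from a single two-variable functional equation. Note first that, since $q^{n(n+1)}=q^{n}\cdot q^{n^{2}}$, the two left-hand sides of \eqref{eq:RR} are the specializations $f(1)$ and $f(q)$ of the series
\[
f(a)\coloneq\sum_{n=0}^{\infty}\frac{a^{n}q^{n^{2}}}{(q;q)_{n}},
\]
regarded as a formal power series in $a$ with coefficients in $\Z[[q]]$. So it suffices to identify $f(a)$ with a closed expression and then set $a=1$ and $a=q$.

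The first step is to produce a $q$-difference equation for $f$. Writing $(q;q)_{n}=(1-q^{n})(q;q)_{n-1}$ and substituting $n=m+1$ gives
\[
f(a)-f(aq)=\sum_{n\ge 1}\frac{a^{n}q^{n^{2}}}{(q;q)_{n-1}}=\sum_{m\ge 0}\frac{a^{m+1}q^{(m+1)^{2}}}{(q;q)_{m}}=aq\sum_{m\ge 0}\frac{(aq^{2})^{m}q^{m^{2}}}{(q;q)_{m}}=aq\,f(aq^{2}),
\]
so that $f(a)=f(aq)+aq\,f(aq^{2})$ with $f(0)=1$. Matching coefficients of $a^{n}$ shows that this equation together with the normalization has a unique solution as a power series in $a$: it forces the recursion $(1-q^{n})c_{n}=q^{2n-1}c_{n-1}$, which reproduces $c_{n}=q^{n^{2}}/(q;q)_{n}$. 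Hence any function we can write down that satisfies the same equation and normalization must coincide with $f$.

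The heart of the matter is to exhibit such a function explicitly. I would consider
\[
g(a)\coloneq\frac{1}{(aq;q)_{\infty}}\sum_{r=0}^{\infty}\frac{(-1)^{r}\,(1-aq^{2r})\,(a;q)_{r}\,a^{2r}\,q^{r(5r-1)/2}}{(1-a)\,(q;q)_{r}},
\]
check that $g(0)=1$ (the $r=0$ summand is identically $1$, since there $1-aq^{0}=1-a$ cancels the denominator $1-a$, while every summand with $r\ge 1$ vanishes at $a=0$ because of the factor $a^{2r}$), and then verify directly that $g(a)=g(aq)+aq\,g(aq^{2})$. After clearing the Pochhammer denominators, this reduces to a telescoping identity relating the terms of the sparse, quintic-exponent series inside $g$. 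This is \emph{the step I expect to be the main obstacle}: constructing this series and confirming the cancellation is the entire analytic content of Rogers--Ramanujan and is where the ``miracle'' lives; historically it is precisely Rogers's insight, and it can alternatively be produced mechanically by applying Bailey's lemma twice to the unit Bailey pair, with the Jacobi triple product entering at the same point. Once $g=f$ is known, the rest is bookkeeping.

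Finally I would specialize. Setting $a=1$, and using $(a;q)_{r}/(1-a)=(aq;q)_{r-1}$ together with $(1-q^{2r})/(1-q^{r})=1+q^{r}$, collapses the $r$-th summand of $g(1)$ to $(-1)^{r}(1+q^{r})q^{r(5r-1)/2}$ for $r\ge 1$ (and to $1$ for $r=0$); a short manipulation---splitting the factor $1+q^{r}$ and reindexing $r\mapsto -r$ in one piece---turns the resulting one-sided sum into the bilateral theta series $\sum_{r=-\infty}^{\infty}(-1)^{r}q^{r(5r+1)/2}$, which by the Jacobi triple product identity equals $(q^{2},q^{3},q^{5};q^{5})_{\infty}$. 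Since $(q;q)_{\infty}=(q,q^{2},q^{3},q^{4},q^{5};q^{5})_{\infty}$, this gives
\[
f(1)=\frac{(q^{2},q^{3},q^{5};q^{5})_{\infty}}{(q;q)_{\infty}}=\frac{1}{(q;q^{5})_{\infty}(q^{4};q^{5})_{\infty}},
\]
the first identity. The case $a=q$ is entirely parallel: here $(a;q)_{r}$ cancels $(q;q)_{r}$ directly, the $r$-th summand becomes $(-1)^{r}(1-q^{2r+1})q^{r(5r+3)/2}/(1-q)$, the stray $1-q$ merges with $(q^{2};q)_{\infty}$ to restore $(q;q)_{\infty}$ in the denominator, and reindexing yields $\sum_{r=-\infty}^{\infty}(-1)^{r}q^{r(5r+3)/2}=(q,q^{4},q^{5};q^{5})_{\infty}$, whence
\[
f(q)=\frac{(q,q^{4},q^{5};q^{5})_{\infty}}{(q;q)_{\infty}}=\frac{1}{(q^{2};q^{5})_{\infty}(q^{3};q^{5})_{\infty}},
\]
the second identity. (Alternatively, one could aim for a bijective proof in the style of Garsia--Milne or Bressoud--Zeilberger after translating \eqref{eq:RR} into the equivalent statements comparing congruence conditions with gap conditions on the parts, but the route above is shorter.)
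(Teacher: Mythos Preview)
The paper does not prove this theorem. It is stated as a classical result with a citation to \cite{RR19}; the only surrounding discussion is the partition interpretation of the sum side (Remark~\ref{rem:staircase}) and a pointer to the Lepowsky--Wilson vertex-operator proof. So there is no ``paper's own proof'' to compare against.

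Your proposal is a correct outline of the classical analytic approach going back to Rogers and Watson: introduce the one-parameter family $f(a)$, derive the $q$-difference equation $f(a)=f(aq)+aq\,f(aq^{2})$, match it against an explicit very-well-poised candidate $g(a)$, and specialize via the Jacobi triple product. The derivation of the functional equation and the specializations at $a=1$, $a=q$ are fine as written. You are also right to flag the verification that $g$ satisfies the same $q$-difference equation as the genuine content; this is not a routine telescoping and in practice one either reproduces Rogers's original manipulation, invokes Watson's $_8\phi_7$ transformation, or iterates the Bailey lemma from the unit pair---any of which would need to be carried out in full for the argument to be complete. As it stands your write-up is an honest proof \emph{sketch} with the hard step correctly identified but not executed.
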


Each of these \(q\)-series identities corresponds to a partition
identity, since both sides are generating functions for certain
classes of partitions. See \cite{Mac16} and \cite{Sch17} for further
details.

\begin{theorem}[Rogers--Ramanujan identities, in terms of partitions]\label{thm:RR2}
  For a positive integer \( n \), the number of partitions of \( n \)
  such that the difference between consecutive parts is at least 2 is
  equal to the number of partitions of \( n \) into parts congruent to
  \( 1 \) or \( 4 \pmod{5} \). Similarly, for a positive integer
  \( n \), the number of partitions of \( n \) into parts greater than
  \( 1 \), with the difference between consecutive parts at least
  \( 2 \), is equal to the number of partitions of \( n \) into parts
  congruent to \( 2 \) or \( 3 \pmod{5} \).
\end{theorem}

Finding an explicit and simple size-preserving bijection for either of
these two partition identities remains notoriously elusive open
problem in enumerative combinatorics.

Lepowsky and Wilson~\cite{LW84,LW85} gave a representation-theoretic
proof of the Rogers--Ramanujan identities, building on earlier work of
Lepowsky and Milne~\cite{LM78}. The product side can be obtained
directly from Lepowsky's product formula (see Lepowsky~\cite{Lep78} or Kac~\cite[Theorem 10.4]{Kac90}) for characters of highest
weight modules of type $A_1^{(1)}$ at level $3$. The representation-theoretic interpretation of the sum side is more
intricate. In these papers, Lepowsky and Wilson developed and applied
the theory of vertex operators to derive it. An alternative approach is to study partitions with difference conditions via the theory of affine and perfect crystals; see Dousse--Konan~\cite{DK19b,DK22}. A representation-theoretic
introduction to this topic is given in the book by Hong and
Kang~\cite{HK02}.

Grounded partitions were introduced by Dousse and Konan
in~\cite{DK19b,DK22}, motivated by the theory of perfect crystals. In
\cite{KMN92}, Kang, Kashiwara, Misra, Miwa, Nakashima, and Nakayashiki
obtained the (KMN)\( ^2 \)-character formula in terms of the energy
function associated with a perfect crystal. Dousse and Konan used this
formula to show that the character of an irreducible highest-weight
module over an affine Lie algebra coincides with the generating
function for the corresponding grounded partitions.

Recently, Dousse, Hardiman, and Konan~\cite{DHK25} defined grounded
partitions of type \( A_1^{(1)} \) at all levels. Throughout this
paper, we consider only grounded partitions of this type and therefore
refer to them simply as grounded partitions, without further mention
of \( A_1^{(1)} \).

\begin{definition}\label{def:grounded_partitions}
  Let \( \ell \) be a nonnegative integer, and let
  \(C_\ell=\{c_0,c_1,\ldots,c_\ell\}\) be a set of \emph{colours}.
  Define \(M_\ell\) to be the square matrix of size \(\ell+1\), with
  rows and columns indexed by \( 0,1, \dots, \ell \), whose
  \((i,j)\)-entry is \(|\ell-i-j|\).
  Let \( c \) be a colour in \( C_\ell \). A \emph{grounded partition}
  at level \(\ell\) with \emph{ground} \(0_{c}\) is a sequence
  \[
    \pi = (\pi_0, \pi_1, \dots, \pi_s),
  \]
  of integers indexed by a colour in \( C_\ell \) with the initial
  part \( \pi_0 = 0_c \), satisfying the exact difference condition:
  for \( j \geq 0 \), if two consecutive parts \(\pi_{j} \) and
  \( \pi_{j+1}\) have sizes \( |\pi_j| \) and \( |\pi_{j+1}| \) with
  colours \(c_{i_j}\) and \(c_{i_{j+1}}\), respectively, then
  \[
    |\pi_{j+1}|-|\pi_{j}| = \text{the \( (i_{j+1},i_j) \)-entry of } M_\ell.
  \]

  We denote by \(\mathcal{P}_{\ell,i}\) the set of all grounded
  partitions at level \( \ell \) with ground \(0_{c_i}\). For a
  grounded partition \( \pi \), the number of nonzero parts is called
  the \emph{length} of \( \pi \), denoted by \( \ell(\pi) \). The
  \emph{size} of \(\pi\) is defined as the size of the underlying
  partition obtained by forgetting the colours, denoted \(|\pi|\). For
  convenience, we omit the ground \(\pi_0\), and write
  \( \pi = (\pi_0, \pi_1, \pi_2, \dots, \pi_s) \) simply as
  \(\pi_1 \pi_2 \cdots\pi_s\).
\end{definition}

The generating function for \( \mathcal{P}_{\ell,i} \) is the
principally specialised character of the irreducible highest-weight
module \( L(\lambda) \) of type \( A_1^{(1)} \) with highest weight
\(\lambda=i\Lambda_0+(\ell-i)\Lambda_1 \), and therefore has a nice
infinite product expression given by Lepowsky’s formula.

\begin{remark}\label{rem:2}
  Partitions are typically written in weakly decreasing order. For
  grounded partitions, however, the sizes of the parts are weakly
  increasing, since the colour of the initial \( 0 \)-part is
  significant. Equivalently, one could define grounded partitions in
  weakly decreasing order by reversing the difference condition and
  setting \( \pi_s = 0_c \) in \Cref{def:grounded_partitions}.
\end{remark}

\begin{remark}\label{rem:3}
  For type \( A_1^{(1)} \), the level of a highest weight
  \( a \Lambda_0 + b \Lambda_1 \) is defined to be \( a+b \). Since
  the highest weight \(i\Lambda_0+(\ell-i)\Lambda_1 \) has level
  \( \ell \), we refer to \( \mathcal{P}_{\ell,i} \) as the set of
  grounded partitions at level \( \ell \).
\end{remark}

\begin{theorem}[{\cite[Theorem~1.6]{DHK25}}]\label{thm:gf_grounded}
  Let \( \ell \) and \( i \) be integers with \( 0 \leq i \leq \ell \).
  Then, the generating function for grounded partitions in
  \( \mathcal{P}_{\ell,i} \) is given by
\[
  \sum_{\pi \in \mathcal{P}_{\ell, i}} q^{|\pi|} = \frac{(q^{i+1};q^{\ell+2})_\infty(q^{\ell-i+1};q^{\ell+2})_\infty(q^{\ell+2};q^{\ell+2})_\infty}{(q;q^2)_\infty (q;q)_\infty}.
\]
\end{theorem}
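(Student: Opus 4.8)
The plan is to prove the identity bijectively, so the first task is to recognise the right-hand side as a genuine infinite product. Grouping $(q;q)_\infty=\prod_{r=1}^{n+2}(q^r;q^{n+2})_\infty$ by residues $\bmod\ (n+2)$ and cancelling, one gets (for $n\neq 2i$)
\[
\sum_{\lambda\in\mathcal{P}_{n,i}}q^{|\lambda|}\ \overset{?}{=}\ \frac{1}{(q;q^2)_\infty}\cdot\frac{1}{\prod_{\substack{1\le r\le n+1\\ r\notin\{i+1,\,n-i+1\}}}(q^r;q^{n+2})_\infty},
\]
with the evident modification when $n=2i$ (one numerator factor survives; in the level-$2$ case $i=1$ it produces $(-q;q^2)_\infty/(q;q^2)_\infty$). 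The right-hand side is then the generating function of pairs consisting of a partition into odd parts and a partition into parts $\not\equiv 0,\,i+1,\,n-i+1\pmod{n+2}$, so the theorem reduces to a weight-preserving bijection between $\mathcal{P}_{n,i}$ and this set. For $n=2$ this is completely concrete: $i=0$ and $i=2$ ask for a bijection onto partitions into parts $\not\equiv 0\pmod 4$, and $i=1$ for a bijection onto pairs (partition into distinct odd parts, partition into odd parts).

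Next I would read off the structure of $\mathcal{P}_{n,i}$ from $M_n$. Listing a grounded partition in weakly increasing order and grouping equal parts into maximal blocks, the fact that the entry of $M_n$ attached to a pair of colours $c_a,c_b$ vanishes exactly when $a+b=n$ forces each such block to alternate between two colours $c_a,c_{n-a}$ (monochromatic when $n$ is even and $a=n/2$), while the jump between the top of one block and the bottom of the next is the prescribed matrix entry for the adjacent colours; thus a grounded partition is exactly a coloured partition with nearest-neighbour difference conditions governed by $M_n$ and anchored at the ground $0_{c_i}$. I would then peel a generalised ``staircase'' off the bottom of $\lambda$—the minimal configuration of parts forced to occur given the ground colour, exactly in the spirit of Example~\ref{ex:conj}(i) and Remark~\ref{rem:staircase}—and argue that after subtracting it and shifting, the remainder splits as an arbitrary partition into the admissible residue classes together with a free partition into odd parts, the two independent gradings roughly reflecting the two ways a block can be enlarged (lengthening a monochromatic run versus extending a colour-alternating run). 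Composing these maps with their inverses gives the bijection; for $n=2$ the staircases are short and can be written out explicitly in each of the three cases. An equivalent route is to turn the bottom-peeling into a system of $q$-difference equations for the family $\{f_{n,i}(q)\coloneq\sum_{\lambda\in\mathcal{P}_{n,i}}q^{|\lambda|}\}_{0\le i\le n}$, solve it, and match the solution against the product; the Jacobi triple product $\sum_{m\in\Z}(-1)^m q^{(n+2)\binom{m}{2}+(i+1)m}=(q^{i+1},q^{n-i+1},q^{n+2};q^{n+2})_\infty$ then repackages the answer into the stated shape.

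The step I expect to be the main obstacle is choosing the bottom staircase—and the accompanying bookkeeping of how much to subtract from the surviving parts and how the ground colour is reassigned—so that the coupling between neighbouring blocks telescopes away and the residue is genuinely unconstrained; a naive bottom-peeling leaves a conditioned partition rather than a free one. In generating-function terms the delicate point is that the weight of a grounded partition is the \emph{shifted} sum $\sum_k (L-k+1)\,m_k$ over the sequence $(m_k)$ of prescribed jumps, not a plain sum over the jumps, so one must check that removing a bottom layer multiplies $f_{n,i}$ by a controlled power of $q$ while cleanly permuting the indices $i$. Once the bijection (equivalently the recursion) is established, recognising the outcome as the displayed infinite product is routine $q$-Pochhammer manipulation of the kind already used in Example~\ref{ex:conj}.
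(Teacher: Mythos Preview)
This theorem is not proved in the present paper: it is quoted from \cite{DHK25}, and the paragraph immediately following the statement explains that the proof there is representation-theoretic---one identifies the generating function with a character via perfect crystals and energy matrices, and then obtains the product by the principal specialisation of the Weyl--Kac character formula. There is therefore no ``paper's own proof'' in the sense of a self-contained argument to compare against.

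Your proposal is a sketch of a bijective/recursive strategy for general $n,i$, and the paper makes clear that no such proof is known: indeed, the paper's stated contribution in Section~\ref{section:bij} is to supply bijections \emph{only} for the level-$2$ cases $\mathcal{P}_{2,a}$ and $\mathcal{P}_{2,b}$. The concrete obstruction to your plan is visible already at $n=3$. As shown in Example~\ref{ex:grounded_partitions}~(v)--(vi), the products for $\mathcal{P}_{3,a}$ and $\mathcal{P}_{3,b}$ are exactly the Rogers--Ramanujan products times $1/(q;q^2)_\infty$; a weight-preserving bijection of the kind you outline would therefore amount to a bijective proof of the Rogers--Ramanujan identities (up to the odd-part factor), which the paper itself flags as an open goal and which is a famously unresolved problem. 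So the ``staircase-peeling'' step you correctly identify as the main obstacle is not a detail to be filled in---for general $n$ it cannot telescope to a free partition into the admissible residue classes, because that would trivialise Rogers--Ramanujan. Your alternative $q$-difference route with the Jacobi triple product is closer in spirit to what is actually known, but it is not the elementary recursion you describe: one needs the full character machinery or Andrews--Gordon/Bailey-type analysis, none of which your sketch supplies.
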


The equality of the generating functions for
\( \mathcal{P}_{\ell,i} \) and \( \mathcal{P}_{\ell,\ell-i} \) is
immediately obtained from Theorem~\ref{thm:gf_grounded}, but it also
has a straightforward combinatorial proof. Since the matrix $M_\ell$
is symmetric, swapping colours $c_j$ and $c_{\ell-j}$ for all \( j \)
yields a size-preserving bijection between $\mathcal{P}_{\ell,i}$ and
$\mathcal{P}_{\ell,\ell-i}$. Thus, the generating function for
\( \mathcal{P}_{\ell,i} \) is equal to that for
\( \mathcal{P}_{\ell,\ell-i} \). Thus, when considering generating
functions, it suffices to consider \( \mathcal{P}_{\ell,i} \) for
\( 0 \leq i \leq \lfloor \ell/2 \rfloor \). Moreover, the generating
function for \( \mathcal{P}_{\ell,i} \) is equal to the principal
specializations of the characters associated with both highest weights
\( i \Lambda_0 + (\ell-i) \Lambda_1 \) and
\( (\ell-i) \Lambda_0 + i \Lambda_1 \).

Some combinatorial aspects of grounded partitions were studied in~\cite[Section~4]{DHK25}. However, no bijective proof is known for the fact that the generating function of grounded partitions with exact difference conditions is an infinite product after principal specialisation. We believe this direction is worth exploring—not only for the sake of uncovering interesting bijections, but also because grounded partitions with exact difference conditions admit an affine crystal structure, as we show in the second part of this work~\cite{PartII}.

\begin{remark}
  After the first version of our preprint appeared, Kanade and
  Russell~\cite{KR25} found an explicit bijection between tight
  cylindric partitions with \(2\) rows and grounded partitions of type
  \(A_1^{(1)}\) and arbitrary level. Borodin proved a product formula
  for the generating function of cylindric partitions of fixed
  profile~\cite{Bor07}. Thus the bijection of Kanade and Russell gives
  another route to the product formulas for the generating functions
  of grounded partitions.
\end{remark}

For clarity of notation, throughout this paper we relabel the colours
by \( (c_0, c_1, c_2, c_3) = (a,b,c,d) \) and accordingly write
\( (\mathcal{P}_{\ell,a}, \mathcal{P}_{\ell,b}) \) for
\( (\mathcal{P}_{\ell,0}, \mathcal{P}_{\ell,1}) \). We consider the
following three matrices, defining grounded partitions at levels
\( 1 \), \( 2 \), and \( 3 \):
\begin{align*}
M_1=\begin{array}{c|cc}
    & \textcolor{blue}{a} & \textcolor{red}{b} \\ \hline
\textcolor{blue}{a} & 1 & 0 \\
\textcolor{red}{b} & 0 & 1 \\
\end{array},
&&
M_2=\begin{array}{c|ccc}
    & \textcolor{blue}{a} & \textcolor{red}{b} & \textcolor{green!60!black}{c} \\ \hline
\textcolor{blue}{a} & 2 & 1 & 0 \\
\textcolor{red}{b} & 1 & 0 & 1 \\
\textcolor{green!60!black}{c} & 0 & 1 & 2 \\
\end{array},
&&
M_3=\begin{array}{c|cccc}
    & \textcolor{blue}{a} & \textcolor{red}{b} & \textcolor{green!60!black}{c} & \textcolor{yellow!60!orange}{d} \\ \hline
\textcolor{blue}{a} & 3 & 2 & 1 & 0 \\
\textcolor{red}{b} & 2 & 1 & 0 & 1 \\
\textcolor{green!60!black}{c} & 1 & 0 & 1 & 2 \\
\textcolor{yellow!60!orange}{d} & 0 & 1 & 2 & 3 \\
\end{array}.
\end{align*}

We usually omit the ground in the notation at the beginning of each
grounded partition. The ground is always clear from context, as we
specify the set of grounded partitions in advance. For example, we
write \( 0_b \in \mathcal{P}_{2,b} \) for the empty partition, using
the subscript \( b \) to indicate the ground. This is the unique
grounded partition of size zero.

In the following three examples, we consider grounded partitions at
levels \( 1 \), \( 2 \), and \( 3 \), respectively, and use
\Cref{thm:gf_grounded} to obtain their generating functions.

\vspace{1em}
\begin{example}\label{ex:grounded_partitions_1}
\begin{enumerate}
\item We list the elements of $\mathcal{P}_{1,b}$ of size $7$. Here the ground is $0_b$.
  \begin{align*}
    \textcolor{red}{1_b} \, \textcolor{blue}{1_a} \, \textcolor{red}{1_b} \, \textcolor{blue}{1_a} \, \textcolor{red}{1_b} \, \textcolor{blue}{1_a} \, \textcolor{red}{1_b},\quad
    \textcolor{red}{1_b} \, \textcolor{blue}{1_a} \, \textcolor{red}{1_b} \, \textcolor{blue}{1_a} \, \textcolor{red}{1_b} \, \textcolor{red}{2_b},\quad
    \textcolor{red}{1_b} \, \textcolor{blue}{1_a} \, \textcolor{red}{1_b} \, \textcolor{red}{2_b} \, \textcolor{blue}{2_a},\quad \textcolor{red}{1_b} \, \textcolor{red}{2_b} \, \textcolor{blue}{2_a} \, \textcolor{red}{2_b},\quad
    \textcolor{red}{1_b} \, \textcolor{blue}{1_a} \, \textcolor{blue}{2_a} \, \textcolor{blue}{3_a}.
  \end{align*}
  In this case, the colours are completely determined by the size of
  the parts, and by forgetting the colours, we recover the partitions where the difference between consecutive parts is either $0$ or $1$; and $1$ appears as a part for nonempty partitions.
  
  The \emph{conjugate} of a partition $\lambda=(\lambda_1,\ldots,\lambda_r)$ is the partition $\lambda'=(\lambda_1',\ldots,\lambda_{\lambda_1}')$, where $\lambda_j'
    \coloneq
    |\{i\in\{1,\ldots,r\} : \lambda_i\geq j\}|$ for $1\leq j\leq \lambda_1$. The elements of $\mathcal{P}_{1,b}$ are precisely the conjugates of partitions into distinct parts, so their generating function is given by the infinite product
\begin{align}
\label{eq:P1b}\sum_{\pi \in \mathcal{P}_{1,b}} q^{|\pi|} = (-q;q)_\infty = 1 + q + q^2 + 2q^3 + 2q^4 + 3q^5 + 4q^6 + 5q^7 + 6q^8 + \cdots.
\end{align}
\item There is a simple size-preserving bijection between $\mathcal{P}_{1,b}$ and $\mathcal{P}_{1,a}$, by swapping the colours $a$ and $b$. Thus, the generating function of $\mathcal{P}_{1,a}$ is the same infinite product. 
\end{enumerate}
\end{example}

\begin{example}\label{ex:grounded_partitions_2}
\begin{enumerate}
\item We list the elements of $\mathcal{P}_{2,b}$ of size \( 5 \), as they will be used explicitly in Section~\ref{section:bij1}:
\begin{align*}
\textcolor{blue}{1_a} \textcolor{green!60!black}{1_c} \textcolor{blue}{1_a} \textcolor{green!60!black}{1_c} \textcolor{blue}{1_a},\ \textcolor{green!60!black}{1_c} \textcolor{blue}{1_a} \textcolor{green!60!black}{1_c} \textcolor{blue}{1_a} \textcolor{green!60!black}{1_c},\ \textcolor{blue}{1_a} \textcolor{green!60!black}{1_c} \textcolor{blue}{1_a} \textcolor{red}{2_b},\ \textcolor{green!60!black}{1_c} \textcolor{blue}{1_a} \textcolor{green!60!black}{1_c} \textcolor{red}{2_b},\ \textcolor{blue}{1_a} \textcolor{red}{2_b} \textcolor{red}{2_b},\ \textcolor{green!60!black}{1_c} \textcolor{red}{2_b} \textcolor{red}{2_b},\ \textcolor{blue}{1_a} \textcolor{green!60!black}{1_c} \textcolor{green!60!black}{3_c},\ \textcolor{green!60!black}{1_c} \textcolor{blue}{1_a} \textcolor{blue}{3_a}.
\end{align*}
The generating function for \( \mathcal{P}_{2,b} \) is given by the
following infinite product form:
\begin{align}\label{eq:P2b}
  \sum_{\pi \in \mathcal{P}_{2,b}} q^{|\pi|}
  = \frac{(-q;q^2)_\infty}{(q;q^2)_\infty} = 1 + 2q + 2q^2 + 4q^3 + 6q^4 + 8q^5 + 12q^6 + \cdots.
\end{align}
\item We list the elements of $\mathcal{P}_{2,a}$ of size \( 5 \), as
  they will be used explicitly in Section~\ref{section:bij2}:
 \begin{align*}
\textcolor{red}{1_b} \textcolor{blue}{2_a} \textcolor{green!60!black}{2_c},\ \textcolor{blue}{2_a} \textcolor{red}{3_b},\ \textcolor{red}{1_b} \textcolor{green!60!black}{2_c} \textcolor{blue}{2_a},\ \textcolor{red}{1_b} \textcolor{red}{1_b} \textcolor{red}{1_b} \textcolor{blue}{2_a},\ \textcolor{red}{1_b} \textcolor{red}{1_b} \textcolor{red}{1_b} \textcolor{green!60!black}{2_c},\ \textcolor{red}{1_b} \textcolor{red}{1_b} \textcolor{red}{1_b} \textcolor{red}{1_b} \textcolor{red}{1_b}.
\end{align*}
The generating function for \( \mathcal{P}_{2,a} \) is given by
\begin{align}\label{eq:P2a}
\sum_{\pi \in \mathcal{P}_{2,a}} q^{|\pi|} = \frac{(-q^2;q^2)_\infty}{(q;q^2)_\infty} = 1 + q + 2q^2 + 3q^3 + 4q^4 + 6q^5 + 9q^6 + \cdots.
\end{align}
\end{enumerate}
\end{example}

\begin{example}\label{ex:grounded_partitions_3}
  The generating functions for \( \mathcal{P}_{3,b} \) and
  \( \mathcal{P}_{3,a} \) are given by
  \begin{align}
    \label{eq:P31_infiniteprod}
    \sum_{\pi \in \mathcal{P}_{3,b}} q^{|\pi|} &= \frac{1}{(q;q^2)_\infty}\frac{1}{(q;q^5)_\infty(q^4;q^5)_\infty} = 1 + 2q + 3q^2 + 5q^3 + 8q^4 + 12q^5 + 18q^6 + \cdots,\\
    \label{eq:P33_infiniteprod}
\sum_{\pi \in \mathcal{P}_{3,a}} q^{|\pi|} &= \frac{1}{(q;q^2)_\infty}\frac{1}{(q^2;q^5)_\infty(q^3;q^5)_\infty} = 1 + q + 2q^2 + 4q^3 + 5q^4 + 8q^5 + 12q^6 + \cdots.
\end{align}
These are the product sides of the Rogers--Ramanujan identities in
Theorem~\ref{thm:RR}, together with a factor \( 1/(q;q^2)_\infty \). 
\end{example}

At level $1$, the problem reduces to the classical generating function
for partitions into distinct parts. At level $2$, however, obtaining the
generating functions of grounded partitions bijectively is already far
from trivial; this is the main content of the paper. At level $3$, the
generating functions are the product sides of the Rogers--Ramanujan
identities multiplied by an extra factor, and finding bijective proofs
appears to be substantially harder.

In this paper, we provide bijective proofs for the
identities~\eqref{eq:P2b} and \eqref{eq:P2a}, which represent the
generating functions for grounded partitions at level \( 2 \). The
generating functions for \( \mathcal{P}_{2,b} \) and
\( \mathcal{P}_{2,a} \) are the principal specialisations of the
characters of the irreducible highest-weight modules of type
\(A_1^{(1)}\) with highest weights \( \Lambda_0 + \Lambda_1 \) and
\(2\Lambda_0\) (or equivalently \(2\Lambda_1\)), respectively.

In particular, we introduce two bijections for \( \mathcal{P}_{2,b} \)
and \( \mathcal{P}_{2,a} \), respectively. An \emph{overpartition},
introduced by Corteel and Lovejoy~\cite{CL2004}, is a partition where
the first occurrence of each number may be overlined. In other words,
the overlined parts form a partition into distinct parts and the
non-overlined parts form a classical partition. The first bijection
goes between grounded partitions in \( \mathcal{P}_{2,b} \) and
overpartitions into odd parts.

\begin{theorem}\label{thm:main_bij1}
  Let \(\mathcal{P}_b(n,k)\) denote the set of grounded partitions in
  \(\mathcal{P}_{2,b}\) of size \(n\) with \(k\) odd parts, and let
  \( \overline{\mathcal{PO}}(n,k) \) denote the set of overpartitions into odd
  parts of size \( n \) and length \( k \).
  Then, for all integers \( n \geq k \geq 0 \), there is a bijection between $\mathcal{P}_b(n,k)$ and $\overline{\mathcal{PO}}(n,k)$. In particular, we have
  \[
    |\mathcal{P}_b(n,k)| = |\overline{\mathcal{PO}}(n,k)|.
  \]
\end{theorem}

The second bijection for \( \mathcal{P}_{2,a} \) is obtained by
modifying the first bijection.

\begin{theorem}\label{thm:main_bij2}
  Let \(\mathcal{P}_a(n,k)\) denote the set of grounded partitions in
  \(\mathcal{P}_{2,a}\) of size \(n\) with \(k\) odd parts, and let
  \( \mathcal{E}(n,k) \) denote the set of partitions of size
  \( n \) with \( k \) odd parts such that all even parts are
  distinct. Then, for all integers \( n \geq k \geq 0 \), there is a bijection between $\mathcal{P}_a(n,k)$ and $\mathcal{E}(n,k)$. In particular, we have
  \[
    |\mathcal{P}_a(n,k)| = |\mathcal{E}(n,k)|.
  \]
\end{theorem}

Using these bijections, we obtain the following \( q \)-series
identities involving an additional parameter \( t \). Their produce
sides can be viewed as refinements of the produce sides in
\eqref{eq:P2b} and \eqref{eq:P2a}, respectively.
\begin{align}
\label{eq:P2b_id}\sum_{n \geq 0} \frac{t^n q^{n^2} (-1; q^2)_{n}}{(t q; q^2)_n (q^2; q^2)_n}
    &= \frac{(-t q;q^2)_\infty}{(t q; q^2)_\infty},\\
\label{eq:P2a_id}\sum_{n\geq0}\frac{q^{n(n+1)}\,(-t q;q^2)_n}{(q^2;q^2)_{n}\,(t q;q^2)_{n+1}}
  &=\frac{(-q^2;q^2)_\infty}{(t q;q^2)_\infty}.
\end{align}

The remainder of this paper is organized as follows. In
\Cref{section:bij1}, we prove \Cref{thm:main_bij1} and
give a bijective proof of \eqref{eq:P2b}. We then use properties of
grounded partitions in \( \mathcal{P}_{2,b} \) to give a combinatorial
proof of \eqref{eq:P2b_id}. \Cref{section:bij2} follows a similar
structure: we prove \Cref{thm:main_bij2}, give a bijective
proof of \eqref{eq:P2a}, and then prove \eqref{eq:P2a_id}
combinatorially.

\section*{Acknowledgements}

The authors warmly thank Jehanne Dousse for her guidance throughout as well as her valuable comments on the earlier versions of this paper. We are also grateful to Thomas Gerber, Frédéric Jouhet, Christian Krattenthaler, Philippe Nadeau and Ali Uncu for valuable discussions and insightful comments that contributed to the development of this work. The authors are funded by the SNSF Eccellenza grant of Jehanne Dousse, PCEFP2 202784.


\section{Bijection between $\mathcal{P}_{2,b}$ and odd overpartitions}\label{section:bij1}

In this section, we prove \Cref{thm:main_bij1} and
obtain \eqref{eq:P2b} as an immediate consequence. We first introduce
a combinatorial object corresponding to the right-hand side of
\eqref{eq:P2b}, namely overpartitions into odd parts, and then
establish a bijection with grounded partitions in
\( \mathcal{P}_{2,b} \).

Let \( \overline{\mathcal{P}} \) denote the
set of overpartitions. It is well-known~\cite{CL2004}
that the generating function of overpartitions is given by
\[
  \sum_{\lambda \in \overline{\mathcal{P}}} q^{|\lambda|} = \frac{(-q;q)_\infty}{(q;q)_\infty}.
\]
We consider overpartitions in which all parts are odd.
Let \( \overline{\mathcal{PO}} \) denote the set of overpartitions into odd
parts. This case is closely related to known results, and the corresponding generating function that keeps track of the length follows readily:
\begin{equation}\label{eq:gf_odd_overpar}
  \sum_{\lambda \in \overline{\mathcal{PO}}} t^{\ell(\lambda)}q^{|\lambda|} = \frac{(-tq;q^2)_\infty}{(tq;q^2)_\infty}.
\end{equation}

\begin{example}
  We list all grounded partitions in $\mathcal{P}_{2,b}$ of sizes
  \( 5 \) in \Cref{ex:grounded_partitions_2}~(i), which we can
  compare with the list of all overpartitions into odd parts of size
  \( 5 \):
\[
(1,1,1,1,1), \ (\overline{1},1,1,1,1), \ (3,1,1), \ (3,\overline{1},1), \ (\overline{3},1,1), \ (\overline{3},\overline{1},1), \ (5), \ (\overline{5}).
\]
\end{example}

We seek a size-preserving bijection between these sets. A more refined version, which also fixes the number of odd parts, is not only easier to construct but also yields a stronger result.

We begin by observing the grounded partitions in
\( \mathcal{P}_{2,b} \). All even parts are coloured \( b \). Consider
an odd part \( 2i+1 \). If \( 2i \) appears in the grounded partition,
then the first \( 2i+1 \) can be coloured either \( a \) or \( c \);
otherwise, the first \( 2i+1 \) has the same colour as that of the
last part \( 2i-1 \). Once the colour of the first occurrence of an
odd part is chosen, the colours of all other parts of the same size
alternate between \( a \) and \( c \). Even parts may or may not
appear, but each odd part must appear at least once. The following
bijection is the main result of this section.

\begin{proof}[Proof of \Cref{thm:main_bij1}]
  We begin by constructing a map from \( \mathcal{P}_b(n,k) \) to
  \( \overline{\mathcal{PO}}(n,k) \). The idea is to add the even
  parts to the odd parts of the grounded partitions in such a way that
  the colour sequence of the odd parts remains unchanged, not only the
  number of odd parts.
  Throughout the proof, all partitions and overpartitions are written in weakly increasing order.
  
  \underline{\it{Step 1:}} Given a grounded partition
  \( \pi \in \mathcal{P}_b(n,k) \), we first define the corresponding
  \emph{minimal grounded partition}
  \( \pi_{\mathrm{min}} \in \mathcal{P}_{2,b} \) with \( k \) odd
  parts as follows. We remove as many even parts as possible from
  \( \pi \), such that the resulting partition still belongs to
  \(\mathcal{P}_{2,b}\) and has \( k \) odd parts. More explicitly,
  suppose that at least one even part \( (2i)_b \) appears between two
  consecutive odd parts, namely
  \( (2i-1)_x(2i)_b \cdots (2i)_b (2i+1)_y \), where
  \( x,y \in \{a,c\} \). If the two odd parts have the same colour,
  that is, \( x=y=a \) or \( x=y=c \), then all such even parts are
  removed, because $(2i-1)_x(2i+1)_y$ still satisfies the difference
  conditions of $M_2$. If they have different colours, namely
  \( (x,y) = (a,c) \) or \( (c,a) \), then all but one even part
  \( (2i)_b \) are removed, because $(2i-1)_x(2i+1)_y$ does not
  satisfy the difference conditions but $(2i-1)_x(2i)_b(2i+1)_y$ does.
  We denote the resulting grounded partition by
  \( \pi_{\mathrm{min}} \), and the partition consisting of the
  removed even parts by \( \pi_e \), where we also removed the colour
  $b$ from the parts. We call the even parts removed from $\pi$ the
  \emph{loose parts}. This step is illustrated in
  Figure~\ref{fig:ground_to_overpart_1}.
  \begin{figure}[H]
    \centering
    \[
      \underbrace{\textcolor{green!60!black}{1_c}\textcolor{blue}{1_a}\textcolor{green!60!black}{1_c}
        \textcolor{red}{2_b}\textcolor{red}{2_b}
        \textcolor{blue}{3_a}\textcolor{green!60!black}{3_c}
        \textcolor{green!60!black}{5_c}\textcolor{blue}{5_a}
        \textcolor{red}{6_b}\textcolor{red}{6_b}
        \textcolor{blue}{7_a}\textcolor{green!60!black}{7_c}\textcolor{blue}{7_a}\textcolor{green!60!black}{7_c}
        \textcolor{red}{8_b}
        \textcolor{blue}{9_a}}_{\pi}
      \quad \mapsto \quad
      \left(
        \underbrace{\textcolor{green!60!black}{1_c}\textcolor{blue}{1_a}\textcolor{green!60!black}{1_c}
          \textcolor{red}{2_b}
          \textcolor{blue}{3_a}\textcolor{green!60!black}{3_c}
          \textcolor{green!60!black}{5_c}\textcolor{blue}{5_a}
          \textcolor{blue}{7_a}\textcolor{green!60!black}{7_c}\textcolor{blue}{7_a}\textcolor{green!60!black}{7_c}
          \textcolor{red}{8_b}
          \textcolor{blue}{9_a}}_{\pi_{\mathrm{min}}}
        \quad,\quad
        \underbrace{(2,6,6)}_{\pi_e}
      \right)%
    \]
    \caption{From grounded partition to overpartition: step 1}
    \label{fig:ground_to_overpart_1}
  \end{figure}

  \underline{\it{Step 2:}} Using \( \pi_{\mathrm{min}} \) and
  \( \pi_e \), we construct an overpartition into odd parts. For each
  even part \( 2i \) in \( \pi_{\mathrm{min}} \), starting from the
  smallest, we increase the \( i \) odd parts to the left of \( 2i \)
  by \( 2 \). By definition of \(\mathcal{P}_{2,b} \), each odd part
  between \(1\) and \(2i-1\) must appear at least once, so there are
  at least \(i\) odd parts to the left of \(2i\). Let \( \pi^{(1)} \)
  be the resulting partition. Note that all parts of \( \pi^{(1)} \)
  are odd, though \( \pi^{(1)} \) may no longer belong to
  \( \mathcal{P}_{2,b} \). This step is illustrated in
  Figure~\ref{fig:ground_to_overpart_2}.
  \begin{figure}[H]
    \centering
    \scalebox{0.9}{%
      \begin{tikzpicture}[baseline=(current bounding box.center), every node/.style={anchor=base}]
        \node (n1)  at (-0.1,0)   {$\Bigg(\, \textcolor{green!60!black}{1_c}$};
        \node (n2)  at (0.5,0) {$\textcolor{blue}{1_a}$};
        \node (n3)  at (1.0,0) {$\textcolor{green!60!black}{1_c}$};
        \node (n4)  at (1.5,0) {$\textcolor{red}{2_b}$};
        \node (n5)  at (2.0,0) {$\textcolor{blue}{3_a}$};
        \node (n6)  at (2.5,0) {$\textcolor{green!60!black}{3_c}$};
        \node (n7)  at (3.0,0) {$\textcolor{green!60!black}{5_c}$};
        \node (n8)  at (3.5,0) {$\textcolor{blue}{5_a}$};
        \node (n9)  at (4.0,0) {$\textcolor{blue}{7_a}$};
        \node (n10) at (4.5,0) {$\textcolor{green!60!black}{7_c}$};
        \node (n11) at (5.0,0) {$\textcolor{blue}{7_a}$};
        \node (n12) at (5.5,0) {$\textcolor{green!60!black}{7_c}$};
        \node (n13) at (6.0,0) {$\textcolor{red}{8_b}$};
        \node (n14) at (6.5,0) {$\textcolor{blue}{9_a}$,\,\,};
        
        \draw[->, red, thick, bend right=40] (n4.north) to (n3.north);
        \draw[->, red, thick, bend right=40] (n13.north) to (n9.north);
        \draw[->, red, thick, bend right=45] (n13.north) to (n10.north);
        \draw[->, red, thick, bend right=50] (n13.north) to (n11.north);
        \draw[->, red, thick, bend right=55] (n13.north) to (n12.north);
        \node at (3.25,-0.1) {\(\underbrace{\hspace{6.8cm}}_{\pi_{\mathrm{min}}}\)};

        \node at (7.8,0) {$\underbrace{(2,6,6)}_{\pi_e} \,\,\Bigg)$};

        \node at (9.0,0) {$\displaystyle \mapsto$};
        \node (r1)  at (13.5,0)  {$      \Bigg(
      \underbrace{\textcolor{green!60!black}{1_c}\,\textcolor{blue}{1_a}\,\textcolor{green!60!black}{3_c}\,\textcolor{blue}{3_a}\,\textcolor{green!60!black}{3_c}\,\textcolor{green!60!black}{5_c}\,\textcolor{blue}{5_a}\,\textcolor{blue}{9_a}\,\textcolor{green!60!black}{9_c}\,\textcolor{blue}{9_a}\,\textcolor{green!60!black}{9_c}\,\textcolor{blue}{9_a}}_{\pi^{(1)}}
      \quad , \quad
      \underbrace{(2,6,6)}_{\pi_e}
      \Bigg)$};
      \end{tikzpicture}
    }
    \caption{From grounded partition to overpartition: step 2}
    \label{fig:ground_to_overpart_2}
  \end{figure}

  \underline{\it{Step 3:}} We now proceed to add the parts of
  \( \pi_e \) to those of \( \pi^{(1)} \). For each even part \( 2i \) in
  \( \pi_e \), we increase the largest \( i \) odd parts of \( \pi^{(1)} \) by
  \( 2 \) (the order of increasing these parts does not matter). Let
  \( \pi^{(2)} \) denote the resulting coloured partition. This step is illustrated in
  Figure~\ref{fig:ground_to_overpart_3}.
  \begin{figure}[H]
    \centering
    \begin{align*}
      \left(
      \underbrace{\textcolor{green!60!black}{1_c}\,\textcolor{blue}{1_a}\,\textcolor{green!60!black}{3_c}\,\textcolor{blue}{3_a}\,\textcolor{green!60!black}{3_c}\,\textcolor{green!60!black}{5_c}\,\textcolor{blue}{5_a}\,\textcolor{blue}{9_a}\,\textcolor{green!60!black}{9_c}\,\textcolor{blue}{9_a}\,\textcolor{green!60!black}{9_c}\,\textcolor{blue}{9_a}}_{\pi^{(1)}}
      \quad,\quad
      \underbrace{(2,6,6)}_{\pi_e}
      \right)
      \quad \mapsto \quad
      \underbrace{\textcolor{green!60!black}{1_c}\,\textcolor{blue}{1_a}\,\textcolor{green!60!black}{3_c}\,\textcolor{blue}{3_a}\,\textcolor{green!60!black}{3_c}\,\textcolor{green!60!black}{5_c}\,\textcolor{blue}{5_a}\,\textcolor{blue}{9_a}\,\textcolor{green!60!black}{9_c}\,\textcolor{blue}{13_a}\,\textcolor{green!60!black}{13_c}\,\textcolor{blue}{15_a}}_{\pi^{(2)}}
    \end{align*}
    \caption{From grounded partition to overpartition: step 3}
    \label{fig:ground_to_overpart_3}
  \end{figure}

  \underline{\it{Step 4:}} Finally, for the first occurrence of each value in \( \pi^{(2)} \), we place an overline when its colour is \( c \), and
  leave it non-overlined when its colour is \( a \). By removing all
  colours from \( \pi^{(2)} \), we obtain an overpartition into odd parts
  of length \( k \) from \( \pi^{(2)} \). This final step is illustrated
  in Figure~\ref{fig:ground_to_overpart_4}.
  \begin{figure}[H]
    \centering
    \begin{align*} 
      \underbrace{\textcolor{green!60!black}{1_c}\,\textcolor{blue}{1_a}\,\textcolor{green!60!black}{3_c}\,\textcolor{blue}{3_a}\,\textcolor{green!60!black}{3_c}\,\textcolor{green!60!black}{5_c}\,\textcolor{blue}{5_a}\,\textcolor{blue}{9_a}\,\textcolor{green!60!black}{9_c}\,\textcolor{blue}{13_a}\,\textcolor{green!60!black}{13_c}\,\textcolor{blue}{15_a}}_{\pi^{(2)}}
      \quad \mapsto \quad \underbrace{(\overline{1},1,\overline{3},3,3,\overline{5},5,9,9,13,13,15)}_{\lambda}
    \end{align*}
    \caption{From grounded partition to overpartition: step 4}
    \label{fig:ground_to_overpart_4}
  \end{figure}

  \underline{\it{Inverse map, Step 1:}} We now define a map from
  \( \overline{\mathcal{PO}}(n,k) \) to \( \mathcal{P}_b(n,k) \),
  which is the inverse of the map defined above. Let
  \( \lambda = (\lambda_1 \leq \cdots \leq \lambda_k) \) be an
  overpartition into odd parts of length \( k \). From
  \( \lambda \), we first construct a grounded partition \( \pi_s \)
  with only odd parts and a specified colour sequence. For a maximal
  sequence \( \lambda_i = \cdots = \lambda_j \) with $i\leq j$ of
  equal parts, we assign colour \( a \) to the part \( \lambda_i \) if
  it is non-overlined, and \( c \) if it is overlined. The remaining parts
  \( \lambda_{i+1},\dots,\lambda_j \) are coloured alternately in
  \( a \) and \( c \). This gives a colour sequence
  \( (s_1,\dots,s_k) \), where \( \lambda_i \) is coloured \( s_i \).
  There is a unique grounded partition \( \pi_s \in \mathcal{P}_{2,b} \)
  with this colour sequence. Indeed, the first part is not coloured
  $b$, and afterwards repeated colours increase the part size by two,
  while alternating colours leave the size unchanged. This step is
  illustrated in Figure~\ref{fig:overpart_to_grounded_1}.
\begin{figure}[H]
\centering
\begin{minipage}{15cm}
\[
\underbrace{
(\overline{1},1,\overline{3},3,3,\overline{5},5,9,9,13,13,15)}_{\lambda} \quad \mapsto \quad \underbrace{ca\,cac\,ca\,ac\,ac\,a}_{(s_1,\dots,s_k)}\quad \mapsto \quad
\underbrace{\textcolor{green!60!black}{1_c}\,
\textcolor{blue}{1_a}\,
\textcolor{green!60!black}{1_c}\,
\textcolor{blue}{1_a}\,
\textcolor{green!60!black}{1_c}\,
\textcolor{green!60!black}{3_c}\,
\textcolor{blue}{3_a}\,
\textcolor{blue}{5_a}\,
\textcolor{green!60!black}{5_c}\,
\textcolor{blue}{5_a}\,
\textcolor{green!60!black}{5_c}\,
\textcolor{blue}{5_a}}_{\pi_s}
\]
\end{minipage}
\caption{From overpartition to grounded partition: step 1}
\label{fig:overpart_to_grounded_1}
\end{figure}

\underline{\it{Step 2:}} We construct an array of \( 2 \)s beneath
the grounded partition \( \pi_s \), arranged so that the sum of each
column gives the corresponding part of the original overpartition
\( \lambda \). This array encodes how much each odd part in \( \pi_s \)
must be increased by to reconstruct \( \lambda \). See
Figure~\ref{fig:overpart_to_grounded_2}.
\begin{figure}[h]
\centering

\begin{minipage}{15cm}
\[
(\overline{1},1,\overline{3},3,3,\overline{5},5,9,9,13,13,15) \quad \mapsto \quad \begin{array}{cccccccccccc}
\textcolor{green!60!black}{1_c} & 
\textcolor{blue}{1_a} & 
\textcolor{green!60!black}{1_c} & 
\textcolor{blue}{1_a} & 
\textcolor{green!60!black}{1_c} & 
\textcolor{green!60!black}{3_c} & 
\textcolor{blue}{3_a} & 
\textcolor{blue}{5_a} & 
\textcolor{green!60!black}{5_c} & 
\textcolor{blue}{5_a} & 
\textcolor{green!60!black}{5_c} & 
\textcolor{blue}{5_a} \\
 & &  \textcolor{red}{2} & \textcolor{red}{2} & \textcolor{red}{2} & \textcolor{red}{2} & 
 \textcolor{red}{2} & 
 \textcolor{red}{2} & 
 \textcolor{red}{2} & 
 \textcolor{red}{2} & 
 \textcolor{red}{2} & 
 \textcolor{red}{2} \\
  & & & & & & & 
 \textcolor{red}{2} & 
 \textcolor{red}{2} & 
 \textcolor{red}{2} & 
 \textcolor{red}{2} & 
 \textcolor{red}{2} \\
 & & & & & & & & & \textcolor{red}{2} & \textcolor{red}{2} & 
 \textcolor{red}{2} \\
 & & & & & & & & & \textcolor{red}{2} & \textcolor{red}{2} & 
 \textcolor{red}{2} \\
 & & & & & & & & & & & 
 \textcolor{red}{2}
\end{array}
\]
\end{minipage}
\caption{From overpartition to grounded partition: step 2}
\label{fig:overpart_to_grounded_2}
\end{figure}

\underline{\it{Step 3:}} This is a recursive construction. At
each iteration, let \( m \) be the number of columns in the array of $2$s,
and let \( 2j-1 \) be the largest part in the current partition. In
Figure~\ref{fig:overpart_to_grounded_2}, we have $m=10$ and $j=3$.
\begin{enumerate}
\item First, assume $m > j$. We insert a non-loose even part into the grounded partition. There
  exists a unique gap between parts \( (2i-1)_x \) and \( (2i-1)_y \)
  with $(x,y)=(a,c)$ or $(c,a)$, such that the number of \( 2 \)s in
  the top row of the array—read from the left up to and including the
  column below \( (2i - 1)_a \)—is exactly \( i \). Indeed, the number of \(2\)s up to and including the largest odd part of size \(2j-1\) is at least \(j\), while the number of \(2\)s up to and including the first part of size \(1\) is at most \(1\). Moving from right to left starting at the part of size \(2j-1\), the count of \(2\)s decreases by \(1\) at each step; the odd part stays the same if and only if the colours \(a\) and \(c\) alternate, otherwise it decreases by \(2\).
  
  We insert
  \( (2i)_b \) into this gap, increase the following \( m-i \) odd
  parts by \( 2 \), and remove the top row of the array of $2$s.
\item If \( m \leq j \), we stop.
\end{enumerate}

The first iteration of this step is illustrated in
Figures~\ref{fig:overpart_to_grounded_3.1}, where we have $m=5$ and
$j=4$.
\begin{figure}[H]
\centering
\begin{minipage}{15cm}
\begin{align*}
&\mapsto \quad \scalebox{1}{$\begin{array}{ccccccccccccc}
\textcolor{green!60!black}{1_c} & 
\textcolor{blue}{1_a} & 
\textcolor{green!60!black}{1_c} & 
\textcolor{red}{2_b} & 
\textcolor{blue}{3_a} & 
\textcolor{green!60!black}{3_c} & 
\textcolor{green!60!black}{5_c} & 
\textcolor{blue}{5_a} & 
\textcolor{blue}{7_a} & 
\textcolor{green!60!black}{7_c} & 
\textcolor{blue}{7_a} & 
\textcolor{green!60!black}{7_c} & 
\textcolor{blue}{7_a} \\
  & & & & & & & & 
 \textcolor{red}{2} & 
 \textcolor{red}{2} & 
 \textcolor{red}{2} & 
 \textcolor{red}{2} & 
 \textcolor{red}{2} \\
 & & & & & & & & & & \textcolor{red}{2} & \textcolor{red}{2} & 
 \textcolor{red}{2} \\
 & & & & & & & & & & \textcolor{red}{2} & \textcolor{red}{2} & 
 \textcolor{red}{2} \\
 & & & & & & & & & & & & 
 \textcolor{red}{2}
\end{array}$}
\end{align*}
\end{minipage}
\caption{From overpartition to grounded partition: step 3, first iteration}
\label{fig:overpart_to_grounded_3.1}
\end{figure}

The second iteration of this step is illustrated in
Figures~\ref{fig:overpart_to_grounded_3.2}, where we have $m=3$ and $j=5$, so we stop.
\begin{figure}[H]
\centering
\begin{minipage}{15cm}
\begin{align*}
&\mapsto \quad \scalebox{1}{$\begin{array}{cccccccccccccc}
\textcolor{green!60!black}{1_c} & 
\textcolor{blue}{1_a} & 
\textcolor{green!60!black}{1_c} & 
\textcolor{red}{2_b} & 
\textcolor{blue}{3_a} & 
\textcolor{green!60!black}{3_c} & 
\textcolor{green!60!black}{5_c} & 
\textcolor{blue}{5_a} & 
\textcolor{blue}{7_a} & 
\textcolor{green!60!black}{7_c} & 
\textcolor{blue}{7_a} & 
\textcolor{green!60!black}{7_c} & 
\textcolor{red}{8_b} & 
\textcolor{blue}{9_a} \\
 & & & & & & & & & & \textcolor{red}{2} & \textcolor{red}{2} & & 
 \textcolor{red}{2} \\
 & & & & & & & & & & \textcolor{red}{2} & \textcolor{red}{2} & &
 \textcolor{red}{2} \\
 & & & & & & & & & & & & & 
 \textcolor{red}{2}
\end{array}$}
\end{align*}
\end{minipage}
\caption{From overpartition to grounded partition: step 3, second iteration}
\label{fig:overpart_to_grounded_3.2}
\end{figure}

Let \( \pi_{\mathrm{min}} \) denote the resulting grounded partition
in the top row, and let \( \pi_e \) denote the partition whose parts
are given by the row sums of the remaining array of \( 2 \)s. From
this step, we obtain \( (\pi_{\mathrm{min}}, \pi_e) \).

\underline{\it{Step 4:}} As the reverse of the procedure described in
Figure~\ref{fig:ground_to_overpart_1}, we obtain a grounded partition
\( \pi \in \mathcal{P}_b(n,k) \) from \( \pi_{\mathrm{min}} \) by
assigning color \( b \) to every part of \( \pi_e \) and inserting
these parts into appropriate positions, that is positions that
preserve the grounded partition.

In the forward map, the procedure for obtaining
\( \lambda \in \overline{\mathcal{PO}}(n,k) \) from
\( \pi \in \mathcal{P}_b(n,k) \) can be written as
\[
  \pi \longmapsto (\pi_{\mathrm{min}}, \pi_e) \longmapsto \lambda.
\]
The first map is clearly reversible. By the construction of the
inverse map, one can easily see that the second map is also
reversible. Since the two maps in the process are reversible, the two
sets \( \mathcal{P}_b(n,k) \) and \( \overline{\mathcal{PO}}(n,k) \)
are in bijection, which completes the proof.
\end{proof}

By Theorem~\ref{thm:main_bij1} and the generating
function~\eqref{eq:gf_odd_overpar} with \( t = 1 \), we arrive at a
bijective proof of \eqref{eq:P2b}.

We now prove \eqref{eq:P2b_id} using properties of grounded partitions
in \( \mathcal{P}_{2,b} \). For a grounded partition \( \pi \), let
\( \odd(\pi) \) denote the number of odd parts in \( \pi \). We first
obtain the following bivariate generating function.

\begin{lemma}\label{lem:gf_P2b_sum}
  We have
  \[
    \sum_{\pi \in \mathcal{P}_{2,b}} t^{\odd(\pi)} q^{|\pi|} = \sum_{n \geq 0} \frac{t^n q^{n^2} (-1; q^2)_{n}}{(t q; q^2)_n (q^2; q^2)_n}.
  \]
\end{lemma}
\begin{proof}
  Let \( 2n-1 \) be the largest odd part of
  \( \pi \in \mathcal{P}_{2,b} \). Then each odd part \(1,3,\dots,2n-1 \)
  appears at least once, contributing the factor
  \( t^n q^{n^2}/(tq;q^2)_n \) without keeping track of the colours.

  In a grounded partition \(\pi \in \mathcal{P}_{2,b} \), all even
  parts are coloured \(b\), and all odd parts are coloured \(a\) or
  \(c\), alternatingly along each number \(2i-1\) for
  \( 1\leq i \leq n \). Thus the colour of the first occurrence of
  \(2i-1\) determines the colour of all parts \(2i-1\). One can encode
  the colour of the first \(2i-1\) by overlining or leaving non-overlined
  the preceding even number \(2i\) (whether or not it appears as a
  part). This contributes the factor \( (-1;q^2)_n\). The even parts
  in \(\pi\) are generated by \(1/(q^2;q^2)_n\), without keeping track
  of the colours. Combining these contributions yields the desired
  generating function.
\end{proof}

\begin{corollary}\label{thm:P2b_id2}
  We have
  \[
    \sum_{n \geq 0} \frac{t^n q^{n^2} (-1; q^2)_{n}}{(t q; q^2)_n (q^2; q^2)_n}
    = \frac{(-t q;q^2)_\infty}{(t q; q^2)_\infty}.
  \]
\end{corollary}
\begin{proof}
  This follows from the generating functions for two partition models
  in Lemma~\ref{lem:gf_P2b_sum} and \eqref{eq:gf_odd_overpar}, together
  with the bijection between the two models in
  Theorem~\ref{thm:main_bij1}.
\end{proof}

\section{Bijection between $\mathcal{P}_{2,a}$ and partitions with distinct even parts}\label{section:bij2}

In this section we prove \Cref{thm:main_bij2} and obtain
\eqref{eq:P2a} combinatorially. The right-hand side of \eqref{eq:P2a}
is the generating function of partitions where even parts do not
repeat. Apart from this distinction, the overall structure of this
section follows that of Section~\ref{section:bij1}.

Let $\mathcal{E}$ denote
the set of partitions where all even parts are distinct. Then, we have
\begin{equation}\label{eq:gf_dist_even}
  \sum_{\lambda \in \mathcal{E}} t^{\odd(\lambda)} q^{|\lambda|} = \frac{(-q^2;q^2)_\infty}{(tq;q^2)_\infty}.
\end{equation}

\begin{example}
We list all grounded partitions in $\mathcal{P}_{2,a}$ of size \( 5 \) in \Cref{ex:grounded_partitions_2}~(ii), which we can compare with the list of all partitions in $\mathcal{E}$ of size \( 5 \):
\[
  (1,1,1,1,1), \ (2,1,1,1), \ (3,1,1), \ (3,2), \ (4,1), \ (5).
\]
\end{example}

\begin{remark}
  There are many different ways to write the product side in terms of $q$-Pochhammer
  symbols. For example, we have:
  \[
    \frac{(-q^2;q^2)_\infty}{(q;q^2)_\infty}
    = \frac{(q^4;q^4)_\infty}{(q;q)_\infty}.
  \]
  The right-hand side is the generating function for partitions with no part divisible by $4$. It seems to us that the product
  on the left-hand side is more convenient for finding a bijection.
\end{remark}

For a grounded partition in \( \mathcal{P}_{2,a} \), all odd parts
have colour \( b \), (unlike in \( \mathcal{P}_{2,b} \)). An even part
\( 2i \) can be coloured either \( a \) or \( c \) if the part
\( 2i-1 \) appears in the partition. If not, then the first occurrence
of \( 2i \) has the same colour as the colour of the last \( 2i-2 \).
Afterwards, the colours of all parts of size \( 2i \) alternate
\( a \) and \( c \). Odd parts may or may not appear, but each even
part must appear at least once.

\begin{proof}[Proof of \Cref{thm:main_bij2}]
  A modification of the bijective proof of Theorem~\ref{thm:main_bij1}
  works. Throughout the proof, all partitions and overpartitions are
  written in weakly increasing order. We construct a map from
  \( \mathcal{P}_a(n,k)\) to \( \mathcal{E}(n,k) \).

  \underline{\it{Step 1:}} The first step is a notational trick. Let
  \(\pi\in\mathcal{P}_a(n,k)\). If an even part \(2i\) in \(\pi\) with
  colour \(c\) follows an odd part \(2i-1\), we put an overline over
  it. If an even part \(2i\) with colour \(a\) follows an odd part
  \(2i-1\), we leave it non-overlined. Furthermore, whenever there are
  two consecutive parts \(2i\) and \(2i+2\) in \(\pi\), we put an
  overline over the first occurrence of \(2i+2\). Finally, we remove
  the colours to obtain an overpartition \(\widetilde{\pi}\). We
  denote the set \(\{\widetilde{\pi}:\pi\in\mathcal{P}_a(n,k)\}\) by
  \(\widetilde{\mathcal{P}}_a(n,k)\), which can be characterised as
  follows. \(\widetilde{\mathcal{P}}_a(n,k)\) is the set of
  overpartitions of size \(n\) with \(k\) odd parts, where the
  difference between consecutive parts is \(0,1\), or \(2\); the
  difference between consecutive odd parts is \(0\); only even parts
  can be overlined; and the first occurrence of an even number is
  overlined if and only if the immediately preceding part is not odd.
  For example, if
  \( \pi = (1_b, 1_b, 1_b, 2_c, 2_a, 2_c, 4_c, 4_a, 6_a, 7_b, 7_b,
  7_b, 8_a, 8_c, 9_b, 9_b, 10_a, 11_b) \) in \(\mathcal{P}_a(99,9) \),
  then
\[
\widetilde{\pi} = (1,1,1,\overline{2},2,2,\overline{4},4,\overline{6},7,7,7,8,8,9,9,10,11) \in \widetilde{\mathcal{P}}_a(99,9).
\]
The map
\( \pi \mapsto \widetilde{\pi} \) is a bijection from
\(\mathcal{P}_a(n,k)\) to \(\widetilde{\mathcal{P}}_a(n,k)\). 

\underline{\it{Step 2:}} The role of the odd parts in the proof of
Theorem~\ref{thm:main_bij1} is replaced by odd parts and overlined even
parts. Take a partition \(\pi \in \mathcal{P}_{a}(n,k)\) and let
\( \widetilde{\pi} \in \widetilde{\mathcal{P}}_a(n,k) \) be the
corresponding overpartition. Define the \emph{minimal overpartition}
\(\pi_{\mathrm{min}} \) to be overpartition obtained from
\( \widetilde{\pi} \) as follows: for each even part size, remove all
non-overlined parts of that size if the corresponding overlined part
occurs, and otherwise remove all but one non-overlined part of that
size. As before, we call the even parts removed from
\(\widetilde{\pi}\) the \emph{loose parts}, and denote by \(\pi_e\)
the partition consisting of the loose parts. Note that
\( \pi_{\mathrm{min}} \) has exactly \( k \) odd parts, 
\( |\pi_{\mathrm{min}}| + |\pi_e| = n \), and \( \pi_{\mathrm{min}} \in \widetilde{\mathcal{P}}_a(n - |\pi_e|,k) \). This step is illustrated in
Figure~\ref{fig:ground_to_E_2}.

\begin{figure}[H]
    \centering
    \begin{minipage}{\textwidth}
    \[
    \begin{aligned}
      &\underbrace{(1,1,1,\overline{2},2,2,\overline{4},4,\overline{6},7,7,7,8,8,9,9,10,11)}_{\widetilde{\pi}}
        \quad\mapsto\quad
      \Bigl(
        \underbrace{(1,1,1,\overline{2},\overline{4},\overline{6},7,7,7,8,9,9,10,11)}_{\pi_{\mathrm{min}}}
        ,\,
        \underbrace{(2,2,4,8)}_{\pi_e}
      \Bigr)
    \end{aligned}
    \]
    \end{minipage}
    \caption{From \(\mathcal{P}_a(n,k)\) to \(\mathcal{E}(n,k)\): step 2}\label{fig:ground_to_E_2}
\end{figure}
\underline{\it{Step 3:}} This step is analogous to Step~2 of the
forward direction of the proof in \Cref{thm:main_bij1}. For each
non-overlined even part \(2i\) in \(\pi_{\mathrm{min}}\), starting from the
smallest, we add \(2\) to the \(i\) odd or overlined even parts that
precede it. Again, by definition of
\(\widetilde{\mathcal{P}}_a(n - |\pi_e|,k)\), there are always at least \(i\)
such odd or overlined even parts. We repeat this process until no
non-overlined even parts remain, and call the resulting partition
\(\pi^{(1)}\). This step is illustrated in
Figure~\ref{fig:ground_to_E_5}.
\begin{figure}[H]
    \centering
  \scalebox{0.9}{%
    \begin{tikzpicture}[baseline=(current bounding box.center), every node/.style={anchor=base}]
        \node (n1)  at (-0.15,0)   {$\Big(\,1$};
        \node (n2)  at (0.6,0) {$1$};
        \node (n3)  at (1.2,0) {$1$};
        \node (n4)  at (1.8,0) {$\overline{2}$};
        \node (n5)  at (2.4,0) {$\overline{4}$};
        \node (n6)  at (3.0,0) {$\overline{6}$};
        \node (n7)  at (3.6,0) {$7$};
        \node (n8)  at (4.2,0) {$7$};
        \node (n9)  at (4.8,0) {$7$};
        \node (n10) at (5.4,0) {$8$};
        \node (n11) at (6.0,0) {$9$};
        \node (n12) at (6.6,0) {$9$};
        \node (n13) at (7.2,0) {$10$};
        \node (n14) at (7.95,0) {$11,\,\,$};
        
        \draw[->, red, thick, bend right=40] (n10.north) to (n6.north);
        \draw[->, red, thick, bend right=40] (n10.north) to (n7.north);
        \draw[->, red, thick, bend right=45] (n10.north) to (n8.north);
        \draw[->, red, thick, bend right=50] (n10.north) to (n9.north);
        \draw[->, red, thick, bend right=55] (n13.north) to (n12.north);
        \draw[->, red, thick, bend right=55] (n13.north) to (n11.north);
        \draw[->, red, thick, bend right=55] (n13.north) to (n9.north);
        \draw[->, red, thick, bend right=55] (n13.north) to (n8.north);
        \draw[->, red, thick, bend right=55] (n13.north) to (n7.north);
        \node at (3.95,-0.1) {\(\underbrace{\hspace{8.2cm}}_{\pi_{\mathrm{min}}}\)};

        \node at (9.5,0) {$\underbrace{(2,2,4,8)}_{\pi_e} \,\,\Bigg)$};

        \node at (10.75,0) {$\displaystyle \mapsto$};
        \node (r1)  at (15.5,0)  {$      \Bigg(
      \underbrace{(1,1,1,\overline{2},\overline{4},\overline{8},11,11,11,11,11,11)}_{\pi^{(1)}},\quad
      \underbrace{(2,2,4,8)}_{\pi_e}
      \Bigg)$};
      \end{tikzpicture}
    }
\caption{From \(\mathcal{P}_a(n,k)\) to \(\mathcal{E}(n,k)\): step 3}
\label{fig:ground_to_E_5}
\end{figure}

\underline{\it{Step 4:}} We construct the overpartition \(\pi^{(2)}\)
from \(\pi^{(1)}\) and \(\pi_e\), as before. For each part \(2i\) in
\(\pi_e\), starting from the largest, we add \(2\) to the largest
\(i\) odd or overlined even parts in \(\pi^{(2)}\). Repeating this
process yields an overpartition \(\pi^{(2)}\) of the same size as
\(\pi\). This step is illustrated in Figure~\ref{fig:ground_to_E_4}.

\begin{figure}[H]
\centering
  \begin{align*}
      \left(
        \underbrace{(1,1,1,\overline{2},\overline{4},\overline{8},11,11,11,11,11,11)}_{\pi^{(1)}}
        \quad,\quad
        \underbrace{(2,2,4,8)}_{\pi_e}
      \right)
      \quad \mapsto \quad
      \underbrace{(1,1,1,\overline{2},\overline{4},\overline{8},11,11,13,13,15,19)}_{\pi^{(2)}}
  \end{align*}
\caption{From \(\mathcal{P}_a(n,k)\) to \(\mathcal{E}(n,k)\): step 4}
\label{fig:ground_to_E_4}
\end{figure}

Finally, we obtain \( \lambda \) in \(\mathcal{E}(n,k)\) from
\(\pi^{(2)}\) by removing all overlines from its (distinct) even parts.

The construction of the inverse map is similar to the inverse map in the proof of \Cref{thm:main_bij1}.

\underline{\it{Inverse map, Step 1:}} Let
\(\lambda=(\lambda_1\leq\cdots\leq\lambda_\ell)\) be a partition in
\(\mathcal{E}(n,k)\). We construct an overpartition \(\lambda_s\) from
\(\lambda\) as follows. If \(\lambda_1\) is odd, then
\((\lambda_s)_1=1\); if \(\lambda_1\) is even, then
\((\lambda_s)_1=2\). For \( 1 \leq i \leq \ell-1 \), if \(\lambda_i\)
and \(\lambda_{i+1}\) are both odd, then
\((\lambda_s)_{i+1}=(\lambda_s)_i\); if \(\lambda_i\) and
\(\lambda_{i+1}\) have different parity, then
\((\lambda_s)_{i+1}=(\lambda_s)_i+1\); if \(\lambda_i\) and
\(\lambda_{i+1}\) are both even, then
\((\lambda_s)_{i+1}=(\lambda_s)_i+2\). After determining all sizes of
parts of \( \lambda_s \), we place an overline on every even part to
complete the construction. This step is illustrated in
Figure~\ref{fig:E_to_grounded_1}.
\begin{figure}[H]
\centering
\begin{minipage}{15cm}
\[
(1,1,1,2,4,8,11,11,13,13,15,19) \quad \mapsto \quad (1,1,1,\overline{2},\overline{4},\overline{6},7,7,7,7,7,7)
\]
\end{minipage}
\caption{From \(\mathcal{E}(n,k)\) to \(\mathcal{P}_a(n,k)\): step 1}
\label{fig:E_to_grounded_1}
\end{figure}

\underline{\it{Step 2:}} We construct an array of \( 2 \)s beneath the
overpartition \( \lambda_s \), arranged so that the sum of each column
gives the corresponding part of the original overpartition
\( \lambda \). This array encodes how much each odd or overlined even
part in \( \lambda_s \) must be increased by to reconstruct
\( \lambda \). See Figure~\ref{fig:E_to_grounded_2}.
\begin{figure}[h]
\centering
\begin{minipage}{15cm}
\[
(1,1,1,2,4,8,11,11,13,13,15,19) \quad \mapsto \quad \begin{array}{ccccccccccccc}
1 & 
1 & 
1 & 
\overline{2} & 
\overline{4} & 
\overline{6} & 
7 & 
7 & 
7 & 
7 & 
7 & 
7 \\
 & & & & & \textcolor{red}{2} & 
 \textcolor{red}{2} & 
 \textcolor{red}{2} & 
 \textcolor{red}{2} &
 \textcolor{red}{2} &
 \textcolor{red}{2} & 
 \textcolor{red}{2} \\
  & & & & & & \textcolor{red}{2} & 
 \textcolor{red}{2} &
 \textcolor{red}{2} & 
 \textcolor{red}{2} &
 \textcolor{red}{2} &
 \textcolor{red}{2} \\
 & & & & & & & & \textcolor{red}{2} & \textcolor{red}{2} & \textcolor{red}{2} & \textcolor{red}{2} \\
 & & & & & & & & & & \textcolor{red}{2} & \textcolor{red}{2} \\
 & & & & & & & & & & & \textcolor{red}{2} \\
 & & & & & & & & & & & \textcolor{red}{2}
\end{array}
\]
\end{minipage}
\caption{From \(\mathcal{E}(n,k)\) to \(\mathcal{P}_a(n,k)\): step 2}
\label{fig:E_to_grounded_2}
\end{figure}

\underline{\it{Step 3:}} This is a recursive construction. At each
iteration, let \( m \) be the number of columns in the array of $2$s,
and let \( 2j-1 \) be the largest part in the current partition. In
Figure~\ref{fig:E_to_grounded_2}, we have \(m=7\) and \(j=4\). We omit
the detailed description of this recursive construction, as it is
completely analogous to the map in Step~3 of the inverse map in the
proof of Theorem~\ref{thm:main_bij1}. The inserted non-loose even
parts are non-overlined. The two iterations in our running examples are
illustrated in Figures~\ref{fig:E_to_grounded_3.1}
and~\ref{fig:E_to_grounded_3.2}. From this procedure, we obtain the
pair \( (\pi_{\mathrm{min}}, \pi_e) \), where \( \pi_{\mathrm{min}} \)
is the overpartition in the top row, and \( \pi_e \) is the partition
whose parts are given by the row sums of the remaining array of
\( 2 \)s.

\begin{figure}[H]
\centering
\begin{minipage}{15cm}
\[
\mapsto \quad \begin{array}{ccccccccccccc}
1 & 
1 & 
1 & 
\overline{2} & 
\overline{4} & 
\overline{6} & 
7 & 
7 & 
7 &
8 & 
9 &
9 &
9 \\
  & & & & & & \textcolor{red}{2} & 
 \textcolor{red}{2} &
 \textcolor{red}{2} & & 
 \textcolor{red}{2} &
 \textcolor{red}{2} & \textcolor{red}{2} \\
 & & & & & & & & \textcolor{red}{2} & & \textcolor{red}{2} &  \textcolor{red}{2} & \textcolor{red}{2} \\
 & & & & & & & & & & &  \textcolor{red}{2} & \textcolor{red}{2} \\
 & & & & & & & & & & & & \textcolor{red}{2} \\
 & & & & & & & & & & & & \textcolor{red}{2}
\end{array}
\]
\end{minipage}
\caption{From \(\mathcal{E}(n,k)\) to \(\mathcal{P}_a(n,k)\): step 3, first iteration}
\label{fig:E_to_grounded_3.1}
\end{figure}

\begin{figure}[H]
\centering
\begin{minipage}{15cm}
\[
\mapsto \quad \begin{array}{cccccccccccccc}
1 & 
1 & 
1 & 
\overline{2} & 
\overline{4} & 
\overline{6} & 
7 & 
7 & 
7 &
8 &
9 &
9 &
10 &
11 \\
 & & & & & & & & \textcolor{red}{2} & & \textcolor{red}{2} &  \textcolor{red}{2} & & \textcolor{red}{2} \\
 & & & & & & & & & & &  \textcolor{red}{2} & & \textcolor{red}{2} \\
 & & & & & & & & & & & & & \textcolor{red}{2} \\
 & & & & & & & & & & & & & \textcolor{red}{2}
\end{array}
\]
\end{minipage}
\caption{From \(\mathcal{E}(n,k)\) to \(\mathcal{P}_a(n,k)\): step 3, second iteration}
\label{fig:E_to_grounded_3.2}
\end{figure}

\underline{\it{Step 4:}} By construction, the overpartition
\( \lambda_s \) obtained in Step~1 of the inverse map lies in
$\widetilde{\mathcal{P}}_a(n,k)$. After applying Steps~2 and 3, the
resulting overpartition \( \pi_{\mathrm{min}} \) still remains in
\(\widetilde{\mathcal{P}}_a(n,k)\). We then construct an overpartition
\( \widetilde{\pi} \in \widetilde{\mathcal{P}}_a(n,k) \) from
\( \pi_{\mathrm{min}} \) by inserting all parts of \( \pi_e \) into
appropriate positions, so that the resulting overpartition remains an
element of \( \widetilde{\mathcal{P}}_a(n,k) \). This is exactly the
reverse of the procedure described in Figure~\ref{fig:ground_to_E_2}.
Hence, we obtain the grounded partition
\( \pi \in \mathcal{P}_a(n,k)\) from \( \widetilde{\pi} \) by the
bijection described in Step 1 of the forward direction.

In the forward map, the procedure for obtaining
\( \lambda \in \mathcal{E}(n,k) \) from
\( \pi \in \mathcal{P}_a(n,k) \) can be written as
\[
  \pi \longmapsto \widetilde{\pi} \longmapsto (\pi_{\mathrm{min}}, \pi_e) \longmapsto \lambda.
\]
The first two maps are clearly reversible. By the construction
of the inverse map, one can easily see that the third map is also
reversible. Since every map in the process is reversible, the two sets
\( \mathcal{P}_a(n,k) \) and \( \mathcal{E}(n,k) \) are in
bijection, which completes the proof.
\end{proof}

By \Cref{thm:main_bij2} and the generating function
\eqref{eq:gf_dist_even} with \( t=1 \), we immediately obtain a
bijective proof of \eqref{eq:P2a}.

We now prove \eqref{eq:P2a_id} using properties of grounded partitions
in \( \mathcal{P}_{2,a} \).

\begin{lemma}\label{lem:gf_P2a_sum}
  We have
  \[
    \sum_{\pi \in \mathcal{P}_{2,a}} t^{\odd(\pi)} q^{|\pi|}
    =  \sum_{n=0}^\infty\frac{q^{n(n+1)}\,(-t q;q^2)_n}{(q^2;q^2)_{n}\,(t q;q^2)_{n+1}}.
  \]
\end{lemma}
\begin{proof}
This is similar to the proof of Lemma~\ref{lem:gf_P2b_sum}.

 Let \( 2n \) be the largest even part in
\( \pi \in \mathcal{P}_{2,a} \). Then each even part \( 2,4,\dots, 2n \)
  appears at least once, contributing the factor
  \( q^{n(n+1)}/(q^2;q^2)_n \) without keeping track of the colours.

  In a grounded partition $\pi \in \mathcal{P}_{2,a} $, all odd parts are coloured $b$, and all even parts are coloured $a$ or $c$, alternatingly along each number $2i$ for \( 1\leq i \leq n \). Thus the colour of the first occurrence of $2i$ determines the colour of all parts $2i$. One can encode the colour of the first $2i$ by overlining or leaving non-overlined the preceding odd number $2i-1$ (whether or not it appears as a part). This contributes the factor
  \( (-tq;q^2)_n\). The odd parts in $\pi$ are generated by \( 1/(tq;q^2)_{n+1} \), without keeping track of the colours. Combining these contributions yields the desired generating function.
\end{proof}

\begin{corollary}\label{thm:P2a_id2}
We have
\begin{align}\label{eq:identity22refined}
\sum_{n\geq0}\frac{q^{n(n+1)}\,(-t q;q^2)_n}{(q^2;q^2)_{n}\,(t q;q^2)_{n+1}}\;=\;\frac{(-q^2;q^2)_\infty}{(t q;q^2)_\infty}.
\end{align}
\end{corollary}
\begin{proof}
  It follows from \Cref{lem:gf_P2a_sum} and \eqref{eq:gf_dist_even},
  and Theorem~\ref{thm:main_bij2}.
\end{proof}

\begin{remark}\label{rem:1}
  The \( q \)-series identities in~\Cref{thm:P2b_id2,thm:P2a_id2} can
  also be proved using basic hypergeometric series. In fact, they are
  special cases of \cite[Exercise~1.6(ii)]{GR2004}. Moreover, both
  identities can also be obtained as special cases of the
  \( q \)-Gauss summation, which was interpreted combinatorially
  in~\cite{Yee2004}.
\end{remark}

\let\clearpage\oldclearpage

\printbibliography

\end{document}